\newtheorem{theorem}{Theorem}[section]
\newtheorem{lemma}[theorem]{Lemma}
\newtheorem{prop}[theorem]{Proposition}
\newtheorem{cor}[theorem]{Corollary}
\theoremstyle{definition}
\newcommand{\N}{\mathbb{N}}
\title{The distribution of the number of parts of $m$-ary partitions modulo $m$.}
\author{Tom Edgar}
\address{Department of Mathematics, Pacific Lutheran University, Tacoma, WA 98447}
\email{edgartj@plu.edu}
\keywords{partitions, m-ary partitions, congruence properties}
\subjclass[2010]{05A17, 11P83}
\date{\today}
\begin{document}

\begin{abstract}
We investigate the number of parts modulo $m$ of $m$-ary partitions of a positive integer $n$. We prove that the number of parts is equidistributed modulo $m$ on a special subset of $m$-ary partitions. As consequences, we explain when the number of parts is equidistributed modulo $m$ on the entire set of partitions, and we provide an alternate proof of a recent result of Andrews, Fraenkel, and Sellers about the number of $m$-ary partitions modulo $m$.
\end{abstract}

\maketitle

\section{Preliminaries and Statement of the Main Result}

Throughout this note, we let $\N=\{0,1,2,3,...\}$ represent the set of natural numbers. For any $m\geq 2$, every natural number $n$ has a unique base-$m$ representation of the form $n = n_0 + n_1m + \cdots + n_k m^k$ with $n_k\ne 0$.  We express this more compactly as $n=(n_0, n_1, \ldots, n_k)_m$ and use the convention that $n_i=0$ if $i>k$.

For $m\geq 2$, we say a partition of $n\in\N$ is an $m$-ary partition if each part is a power of $m$. We let $b_m(n)$ represent the number of $m$-ary partitions of $n$. For instance, the $2$-ary partitions of $8$ are
$$8,\ \ 4+4,\ \ 4+2+2,\ \ 4+2+ 1+ 1,\ \ 4+1+1+1+1,\ \  2+ 2+ 2+ 2, 2+ 2+ 2+ 1+ 1,$$
$$\ \  2+ 2+ 1+ 1+ 1+1,\ \ 2+ 1+ 1+ 1+1+ 1+ 1, \ \ 1+ 1+ 1+ 1+ 1+ 1+ 1+ 1,$$
so that $b_2(8)=10$.

In a recent issue of \textit{The American Mathematical Monthly}, Andrews, Fraenkel, and Sellers (see \cite{EFS}) provided the following beautiful characterization of the number of $m$-ary partitions mod $m$ relying only on the base-$m$ representation of a number. 
\begin{theorem}[Andrews, Fraenkel, Sellers]
\label{afs}
If $m\geq 2$ and $n=(n_0,n_1,\ldots,n_k)_m$ then
$$b_m(mn)=\prod_{i=0}^k(n_i+1) \pmod{m}.$$
\end{theorem}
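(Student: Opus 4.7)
The plan is to prove the formula by induction on the number of base-$m$ digits of $n$, reducing everything to two standard identities for $b_m$: first, $b_m(mj + r) = b_m(mj)$ whenever $0 \leq r < m$, and second, $b_m(mn) = \sum_{j=0}^n b_m(j)$. I would prove the first identity by the bijection that adjoins or strips $r$ copies of the part $1$, and the second by the observation that in any $m$-ary partition of $mn$ the number of $1$'s is necessarily a multiple of $m$, say $mc$; the remaining parts, divided by $m$, then form an $m$-ary partition of $n - c$, and $c$ ranges over $\{0, 1, \ldots, n\}$.

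The heart of the argument would then be a single modular reduction. Writing $n = m\ell + n_0$ with $0 \leq n_0 < m$ and splitting the sum from the second identity into blocks of $m$ consecutive integers gives
\[ b_m(mn) = \sum_{t=0}^{\ell-1}\sum_{r=0}^{m-1} b_m(mt+r) + \sum_{r=0}^{n_0} b_m(m\ell + r). \]
By the first identity, each inner sum in the first term collapses to $m \cdot b_m(mt) \equiv 0 \pmod m$, while each term in the second sum equals $b_m(m\ell)$. Hence
\[ b_m(mn) \equiv (n_0 + 1)\, b_m(m\ell) \pmod{m}. \]
Since $\ell = (n_1, \ldots, n_k)_m$, induction on $k$ applied to $\ell$ yields $b_m(m\ell) \equiv \prod_{i=1}^k(n_i+1) \pmod m$, and multiplying by $(n_0 + 1)$ finishes the argument. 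The base case $k = 0$, i.e.\ $n = n_0 < m$, follows immediately from the two identities, which together give $b_m(mn_0) = \sum_{j=0}^{n_0} 1 = n_0 + 1$.

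The main obstacle I anticipate is making the bijective proof of the second identity completely watertight, since one must track the ``excess'' copies of $1$ through division by $m$ without double counting. Everything downstream of that identity is routine: the key insight is simply that within each block of $m$ consecutive values the counts of $b_m$ are constant, so modulo $m$ the contribution of any full block vanishes and only the final partial block of length $n_0 + 1$ survives, cleanly exposing the factor $(n_0 + 1)$ at each stage of the digit induction.
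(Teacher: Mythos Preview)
Your argument is correct. The two identities are standard and the bijection behind the second one is unproblematic: removing the $mc$ ones and dividing every remaining part by $m$ is a transparent bijection between $m$-ary partitions of $mn$ having exactly $mc$ ones and $m$-ary partitions of $n-c$, so there is no double counting to worry about.

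That said, your route is genuinely different from the paper's. The paper does not use the recursion $b_m(mn)=\sum_{j=0}^{n}b_m(j)$ at all. Instead it splits the set $P_m(n)$ of $m$-ary partitions of $n$ into \emph{simple} partitions $S_m(n)$ (those with $\ell_i\le n_i$ for all $i\ge 1$, where $n=(n_0,\ldots,n_k)_m$) and \emph{non-simple} partitions $N_m(n)$, and then proves combinatorially---via a layered decomposition of $N_m(n)$ into blocks each of size a multiple of $m$---that the number-of-parts function is equidistributed modulo $m$ on $N_m(n)$. In particular $|N_m(n)|\equiv 0\pmod m$, and since $|S_m(n)|=\prod_{i=1}^{k}(n_i+1)$ by a direct bijection with integers $m$-dominated by $n$ in the same residue class, the congruence follows (stated for $b_m(n)$ and then translated to $b_m(mn)$ via your first identity). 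Your proof is shorter and more elementary, and is in spirit the combinatorial unpacking of the original Andrews--Fraenkel--Sellers generating-function argument. The paper's proof is longer but buys extra structure: it gives a combinatorial meaning to the quotient $\bigl(b_m(n)-\prod_{i\ge 1}(n_i+1)\bigr)/m$ and, as a by-product, characterises exactly when the number of parts is equidistributed modulo $m$ on all of $P_m(n)$.
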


Their elegant proof follows from clever manipulation of power series and the generating function for $m$-ary partitions. Their result allows for a uniform proof of many known congruence properties of $m$-ary partitions originally conjectured by Churchhouse and proved by R\o dseth, Andrews and Gupta (see \cite{churchhouse}, \cite{rodseth}, \cite{andrews}, \cite{gupta1}, and \cite{gupta2}). 

Theorem \ref{afs} implies that 
$$b_m(mn) -  \prod_{i=0}^k(n_i+1) =  m\cdot q$$
for some $q\in\N$. Our primary result (Theorem \ref{equidistributethm}) provides a combinatorial interpretation for the value of $q$. Furthermore, as a corollary to our main result, we obtain a new proof of Theorem \ref{afs} that does not rely on generating functions. 

We note that the product in Theorem \ref{afs}, $\prod_{i=0}^k(n_i+1)$, arises in various other places; for instance, when $m$ is prime, this number counts the nonzero entries in row $n$ of Pascal's Triangle mod $m$ (see \cite{fine}). This product can also be interpreted in terms of a partial order on the Natural numbers arising from base-$m$ representations. In particular, for fixed $m\geq 2$, we let $\ll_m$ represent the digital dominance order defined by $a\ll_mb$ if $a_i\leq b_i$ for all $i$, where $a=(a_0,a_1,\ldots, a_k)_m$ and $b=(b_0,b_1,\ldots,b_l)_m$ (see \cite{edgarjudaball} or \cite{judaball}). Then, for $n=(n_0,n_1,\ldots,n_k)_m$, the product $\prod_{i=0}^k(n_i+1)$ counts the number of integers dominated by $n$ (see \cite{edgarjudaball}). We will use the interpretation of the product in terms of the $m$-dominance order in what follows.

Now, let $n$ be a positive integer with $m^{k}\leq n< m^{k+1}$, then every $m$-ary partition is of the form
$$\ell_k\cdot m^k+\ell_{k-1}\cdot m^{k-1}+\cdots \ell_1\cdot m+\ell_0$$
with $\ell_i\geq 0$ for all $i$. We will denote such a partition by $[\ell_0,\ell_1,\ldots,\ell_{k-1},\ell_{k}]_m$. We mention here that the base-$m$ representation of $n$ yields an $m$-ary partition 
$$(n_0,n_1,\ldots, n_k)_m\mapsto [n_0,n_1,\ldots, n_k]_m.$$

Finally, We define a function $nops$ from $m$-ary partitions of $n$ to $\N$ by 
$$nops([\ell_0,\ell_1,\ldots,\ell_{k-1},\ell_{k}]_m)=\sum_{i=0}^k\ell_i;$$ 
this represents the number of parts of the partition.

Now, let $n=(n_0,n_1,\ldots, n_k)_m$. We call an $m$-ary partition, $\ell$, of $n$ \textit{simple} if $\ell=[\ell_0,\ell_1,\ldots,\ell_k]_m$ with $\ell_i\leq n_i$ for all $i\geq 1$. Thus, simple partitions are obtained by replacing powers of $m$ in the $m$-ary representation with the appropriate number of $1$'s. Let $P_m(n)$ be the set of $m$-ary partitions of $n$, $S_m(n)$ be the set of simple $m$-ary partitions of $n$, and $N_m(n)=P_m(n)\setminus S_m(n)$ be the set of non-simple $m$-ary partitions of $n$. Restricting the function $nops$ to $N_m(n)$, we get the following result.

\begin{theorem}
\label{equidistributethm}
Let $m\geq 2$ and $n\in\N$. Then the $nops$ function is equidistributed modulo $m$ on the set $N_m(n)$.
\end{theorem}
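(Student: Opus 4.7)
The plan is to partition $N_m(n)$ into explicit equivalence classes, show each non-empty class has size a multiple of $m$, and verify that $nops$ modulo $m$ is equidistributed on each class; summing over classes then proves the theorem.

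For $\ell = [\ell_0,\ldots,\ell_k]_m \in N_m(n)$, let $j(\ell) := \min\{j \geq 1 : \ell_j > n_j\}$, which exists by non-simplicity. I would declare $\ell \sim \ell'$ when $j(\ell) = j(\ell') =: j$ and $\ell_i = \ell'_i$ for every $i \notin \{0, j\}$. Within such a class, the partitions are parameterized by $\ell_j$, with $\ell_0$ determined by the constraint $\ell_0 + m^j \ell_j = C$, where $C := n - \sum_{i \notin \{0,j\}} \ell_i m^i$. The requirement $\ell_0 \geq 0$ forces $\ell_j \leq \lfloor C/m^j \rfloor$, while $j(\ell) = j$ forces $\ell_j \geq n_j + 1$, so $\ell_j$ traverses a consecutive integer interval.

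The central computation is to show that this interval has length divisible by $m$. I would decompose $C = C_{<j} + n_j m^j + C_{>j}$ with $C_{<j} := n_0 + \sum_{1 \leq i < j}(n_i - \ell_i)m^i$ and $C_{>j} := \sum_{i > j}(n_i - \ell_i)m^i$. The minimality of $j$ yields $\ell_i \leq n_i$ for $1 \leq i < j$, so each coefficient $n_i - \ell_i$ lies in $\{0,\ldots,m-1\}$ and hence $0 \leq C_{<j} < m^j$; meanwhile $C_{>j}$ is automatically a multiple of $m^{j+1}$, say $m^{j+1}D$. Thus $\lfloor C/m^j \rfloor = n_j + mD$, and the non-empty classes (those with $D \geq 1$) have exactly $mD$ elements.

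Finally, incrementing $\ell_j$ by $1$ (and decrementing $\ell_0$ by $m^j$) changes $nops$ by $1 - m^j \equiv 1 \pmod{m}$ since $j \geq 1$. Hence as $\ell_j$ ranges over its $mD$ consecutive values, $nops$ modulo $m$ cycles through every residue exactly $D$ times, giving equidistribution on each class. The main obstacle, really the only nontrivial step, is the digit-by-digit bookkeeping needed to show $\lfloor C/m^j\rfloor \equiv n_j \pmod{m}$; the rest is routine.
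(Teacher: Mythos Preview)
Your proposal is correct and follows essentially the same route as the paper. The paper builds the partition of $N_m(n)$ in three nested layers---first by the fibers of $f_{m,n}(\ell)=(\min(n_i,\ell_i))_i$, then by the index $z=\min\{i\ge1:\ell_i\neq b_i\}$, and finally by fixing $\ell_i$ for $i>z$---but the resulting finest classes coincide exactly with your single equivalence relation (same $j(\ell)$ and same $\ell_i$ for $i\notin\{0,j\}$), and the paper's Lemma~\ref{maxchain}/Corollary~\ref{maxchain2}/Proposition~\ref{smallclass} amount to the same digit computation you carry out to show each class has size $mD$ with $nops$ cycling through all residues.
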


As a corollary, we obtain the following.

\begin{cor}
\label{congruence}
Let $b_m(n)$ be the number of $m$-ary partitions of $n=(n_0,n_1,\ldots,n_k)_m$, then 
$$b_m(n)\equiv \prod_{i=1}^k(n_i+1) \pmod{m}.$$
\end{cor}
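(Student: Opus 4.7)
The plan is to leverage Theorem \ref{equidistributethm} through the decomposition $P_m(n) = S_m(n) \sqcup N_m(n)$. Since the $nops$ statistic is equidistributed modulo $m$ on $N_m(n)$, each residue class contains exactly $|N_m(n)|/m$ partitions, so in particular $|N_m(n)|$ is divisible by $m$. Consequently
$$b_m(n) = |S_m(n)| + |N_m(n)| \equiv |S_m(n)| \pmod{m},$$
and the corollary reduces to verifying that $|S_m(n)| = \prod_{i=1}^k (n_i+1)$.

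To enumerate $S_m(n)$, I would observe that a simple partition $[\ell_0, \ell_1, \ldots, \ell_k]_m$ of $n$ requires $0 \leq \ell_i \leq n_i$ for each $i \geq 1$, while $\ell_0$ faces only the non-negativity constraint. Because the partition must sum to $n$, the zeroth coefficient is forced: $\ell_0 = n - \sum_{i=1}^k \ell_i m^i$. The one small verification I expect to need is that this $\ell_0$ is always non-negative, which follows from
$$\sum_{i=1}^k \ell_i m^i \leq \sum_{i=1}^k n_i m^i = n - n_0 \leq n.$$
Hence the map $(\ell_1, \ldots, \ell_k) \mapsto [\ell_0, \ell_1, \ldots, \ell_k]_m$ is a bijection from $\prod_{i=1}^k \{0, 1, \ldots, n_i\}$ onto $S_m(n)$, giving $|S_m(n)| = \prod_{i=1}^k (n_i+1)$ immediately.

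Combining the two observations yields the claimed congruence. There is no substantial obstacle here; essentially all of the combinatorial content has been absorbed into Theorem \ref{equidistributethm}, and what remains is a direct counting argument together with the remark that equidistribution on a finite set forces its cardinality to be divisible by $m$. A minor degenerate case worth noting is $1 \leq n < m$ (so $k = 0$), in which case $S_m(n)$ contains the lone partition $[n]_m$, $N_m(n) = \emptyset$, and the empty product correctly returns $1$.
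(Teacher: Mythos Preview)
Your proposal is correct and follows essentially the same approach as the paper: use Theorem~\ref{equidistributethm} to conclude $|N_m(n)|\equiv 0\pmod m$, then count $|S_m(n)|=\prod_{i=1}^k(n_i+1)$ and combine. The only cosmetic difference is that the paper phrases the count of $S_m(n)$ as a bijection with the integers $m$-dominated by $n$ that are congruent to $n$ modulo $m$, whereas you biject directly with the box $\prod_{i=1}^k\{0,1,\ldots,n_i\}$; these are the same bijection in different clothing.
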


We note that the previous corollary is stated slightly differently than Theorem \ref{afs}, which is given only for $b_m(mn)$; however, due to the fact that $b_m(mn+r)=b_m(mn)$ when $0<r<m$ (as stated in \cite{EFS}), the two forms are equivalent.

This paper is organized as follows. Section \ref{technical} contains the details necessary to prove Theorem \ref{equidistributethm}. We prove the theorem and its corollary in Section \ref{proofofthm}. In addition, we use Theorem \ref{equidistributethm} to describe when $nops$ is equidistributed mod $m$ on the entire set of $m$-ary partitions, $P_m(n)$. Section \ref{partitionexample} contains a detailed example illustrating the results in Sections \ref{technical} and \ref{proofofthm}. Finally, in Section \ref{future} we describe some possible extensions.
\section{Technical details}
\label{technical}

In this section, we provide a systematic way to partition $N_m(n)$, which will be used to prove Theorem \ref{equidistributethm}. We have included a detailed example of this method of partitioning in Section \ref{partitionexample}.

Let $m\geq 2$ and $n\in \N$ be fixed with $n=(n_0,n_1,\ldots,n_k)_m$. We first define a function $f_{m,n}:N_m(n)\to \N$ by
$$f_{m,n}( [\ell_0,\ell_1,\ldots,\ell_k]_m) = (b_0,b_1,b_2,\ldots,b_k)_m,$$
where $b_i=\min(n_i,\ell_i)$ for all $i$; we note that $b_0=n_0$ since $\ell_0\equiv n_0\pmod{m}$. The following lemma follows by construction.

\begin{lemma}
For any non-simple partition $\ell\in N_m(n)$, then we have $f_{m,n}(\ell)\ll_m n$.
\end{lemma}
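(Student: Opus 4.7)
The plan is to verify directly that $(b_0,b_1,\ldots,b_k)_m$ is a bona fide base-$m$ representation whose digits are pointwise bounded by the digits of $n$; then the conclusion is immediate from the definition of $\ll_m$. The argument does not actually require that $\ell$ be non-simple — non-simplicity is merely the setting in which $f_{m,n}$ has been defined.

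First I would check that the string $(b_0,b_1,\ldots,b_k)_m$ really is the base-$m$ expansion of the integer $f_{m,n}(\ell)$. Since $n=(n_0,n_1,\ldots,n_k)_m$, each $n_i$ lies in $\{0,1,\ldots,m-1\}$, and therefore
\[
0 \;\leq\; b_i \;=\; \min(n_i,\ell_i) \;\leq\; n_i \;\leq\; m-1
\]
for every $i$. Thus each $b_i$ is a legal base-$m$ digit, and the uniqueness of base-$m$ representations guarantees that the coefficients $b_0,b_1,\ldots,b_k$ are exactly the base-$m$ digits of the number $f_{m,n}(\ell)$.

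Next I would apply the definition of the digital dominance order. By that definition, $a\ll_m b$ holds precisely when the base-$m$ digits of $a$ are pointwise dominated by the corresponding digits of $b$. Here the $i$-th base-$m$ digit of $f_{m,n}(\ell)$ is $b_i$, and the $i$-th base-$m$ digit of $n$ is $n_i$, and by the very definition $b_i=\min(n_i,\ell_i)\leq n_i$ for all $i$. Hence $f_{m,n}(\ell)\ll_m n$, as required.

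The only conceptual obstacle — minor as it is — is remembering to verify that $(b_0,\ldots,b_k)_m$ is syntactically a valid base-$m$ representation before invoking $\ll_m$; that verification is precisely what the bound $b_i\leq n_i<m$ supplies. Everything else is unwinding definitions, which is why the paper simply notes that the lemma "follows by construction."
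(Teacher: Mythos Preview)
Your argument is correct and is exactly the unwinding of definitions that the paper gestures at with the phrase ``follows by construction.'' There is no alternative proof in the paper to compare against; you have simply made explicit the two checks (that each $b_i$ lies in $\{0,\ldots,m-1\}$, and that $b_i\le n_i$) which the author left implicit.
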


Now, we use $f_{m,n}$ to define a relation on $N_m(n)$ by $\rho\sim \gamma$ if $f_{m,n}(\rho)=f_{m,n}(\gamma)$.

\begin{lemma}
\label{fpartition}
The relation $\sim$ is an equivalence relation, and so 
$$\{f_{m,n}^{-1}(b) \mid b\in \N \mbox{ and } b\ll_m n \mbox{ and } f_{m,n}^{-1}(b)\ne\emptyset\}$$
forms a partition of $N_m(n)$.
\end{lemma}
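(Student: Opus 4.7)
The plan is to observe that this lemma is essentially a general fact about fibers of functions, specialized to $f_{m,n}$, with the previous lemma used to pin down the indexing set.

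First I would verify that $\sim$ is an equivalence relation. Since $\sim$ is defined by equality of images under the function $f_{m,n}$, reflexivity, symmetry, and transitivity all follow immediately from the corresponding properties of equality in $\N$. Concretely, $f_{m,n}(\rho) = f_{m,n}(\rho)$ gives reflexivity; if $f_{m,n}(\rho) = f_{m,n}(\gamma)$ then $f_{m,n}(\gamma) = f_{m,n}(\rho)$, giving symmetry; and transitivity follows the same way.

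Next, I would invoke the standard fact that the equivalence classes of an equivalence relation on a set $X$ form a partition of $X$. Applied to $X = N_m(n)$, the equivalence class of $\rho$ under $\sim$ is exactly $f_{m,n}^{-1}(f_{m,n}(\rho))$, so the collection of equivalence classes is precisely $\{f_{m,n}^{-1}(b) : b \in f_{m,n}(N_m(n))\}$, which is the same as $\{f_{m,n}^{-1}(b) : b \in \N,\ f_{m,n}^{-1}(b) \neq \emptyset\}$.

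Finally, to match the indexing set in the statement, I would use the previous lemma, which guarantees $f_{m,n}(\ell) \ll_m n$ for every $\ell \in N_m(n)$. This means the image of $f_{m,n}$ is contained in $\{b \in \N : b \ll_m n\}$, so adding the condition $b \ll_m n$ to the indexing set does not discard any nonempty fibers. The resulting collection therefore still partitions $N_m(n)$, which is exactly the claim. There is no substantive obstacle here; the lemma is essentially a bookkeeping statement whose role is to set up the decomposition of $N_m(n)$ that will be exploited in the proof of Theorem \ref{equidistributethm}.
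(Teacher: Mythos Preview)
Your proposal is correct and follows essentially the same approach as the paper, which simply notes that any function yields such an equivalence relation via its fibers. Your version is just more explicit, and you correctly invoke the preceding lemma to justify restricting the index set to $b \ll_m n$.
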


\begin{proof}
Any function yields such an equivalence relation.
\end{proof}

\begin{lemma}
\label{decompose}
Let $\ell$ be a non-simple $m$-ary partition of $n$. Then $\ell$ can be component-wise decomposed as
$$\ell = [\ell_0,\ell_1,\ldots,\ell_k]_m=[r_0,r_1,\ldots,r_k]_{m}+[b_0,b_1,b_2,\ldots,b_k]_m$$
where $b=(b_0,b_1,b_2,\ldots,b_k)_m=f_{m,n}(\ell)$ and $r_i\geq 0$ for all $i$. Moreover, it follows that $r_i>0$ only if $n_i=b_i$.
\end{lemma}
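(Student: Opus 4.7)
The statement is essentially definitional, so the plan is to define the $r_i$ in the only possible way and verify the two non-trivial assertions directly from the construction of $f_{m,n}$.

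First I would set $r_i := \ell_i - b_i$ for each $i = 0, 1, \ldots, k$, so that the component-wise equation $\ell_i = r_i + b_i$ holds by construction. Since $b_i = \min(n_i, \ell_i) \leq \ell_i$, we immediately get $r_i \geq 0$, which also confirms that the decomposition written in the lemma makes sense (and, taking $m$-weighted sums, both sides sum to $n$, since $\ell$ partitions $n$).

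For the ``moreover'' clause I would argue by cases on the minimum in the definition of $b_i$. If $\ell_i \leq n_i$, then $b_i = \ell_i$ and so $r_i = 0$. Contrapositively, $r_i > 0$ forces $\ell_i > n_i$, in which case $b_i = \min(n_i, \ell_i) = n_i$, giving exactly the required implication that $r_i > 0$ only if $n_i = b_i$.

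There is no real obstacle here; the lemma is just unpacking the definition of $f_{m,n}$ and recording the fact that $r_i$ is nonzero precisely at the ``overflow'' coordinates where $\ell_i$ exceeds $n_i$. The content of the lemma lies in the way it will be used later: it isolates the ``base-$m$ part'' $b \ll_m n$ of $\ell$ and the ``excess'' $r$, with the excess supported only on coordinates where $b$ has already saturated $n$, which is precisely what is needed to set up the equidistribution argument for $nops$ on each fiber $f_{m,n}^{-1}(b)$.
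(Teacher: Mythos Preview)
Your proof is correct and follows exactly the same approach as the paper's own argument: define $r_i=\ell_i-\min(\ell_i,n_i)$, observe this is nonnegative, and note that $r_i>0$ forces the minimum to be $n_i$, hence $b_i=n_i$. The additional commentary you include about the role of the lemma is accurate but not part of the paper's proof.
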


\begin{proof}
Since $r_i=\ell_i-\min(\ell_i,n_i)$, it is clear that $r_i\geq 0$. Now if $r_i>0$, then $\min(\ell_i,n_i)\ne \ell_i$ so that $b_i=n_i$ as required.
\end{proof}

\begin{lemma}
\label{maxchain}
Let $\ell$ be a non-simple $m$-ary partition of $n=(n_0,n_1,\ldots,n_k)_m$ with $\ell\in f_{m,n}^{-1}(b)$ where $b=(b_0,b_1,...,b_k)_m$. Suppose that $\ell$ is of the form 
$$\ell=[\ell_0,b_1,b_2,\ldots, b_{j-1}, \ell_j,\ell_{j+1},\ldots,\ell_k]_m$$
with $\ell_j>n_j=b_j$. Then, there is a unique pair $(r,h)$ with $r\geq 1$ and $0\leq h< m^j$ such that $\ell_j\leq n_j+mr$, there is an $m$-ary partition of the form $[h,b_1,b_2,\ldots, b_{j-1}, b_j+mr,\ell_{j+1},\ldots,\ell_k]_m$, and there is no $m$-ary partition of the form $[h',b_1,b_2,\ldots, b_{j-1}, g,\ell_{j+1},\ldots,\ell_k]_m$ with $g>b_j+mr$.
\end{lemma}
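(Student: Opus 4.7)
The plan is to reduce the existence question to a single linear equation in the unknowns $h'$ and $g$, solve it by Euclidean division to identify the extremal $g$, and then use the fact that $\ell$ itself is a partition of $n$ to determine the residue of that $g$ modulo $m$. First I would observe that a candidate $[h', b_1, \ldots, b_{j-1}, g, \ell_{j+1}, \ldots, \ell_k]_m$ is an $m$-ary partition of $n$ if and only if
\[ h' + g\, m^j = C, \qquad C := n - \sum_{i=1}^{j-1} b_i m^i - \sum_{i=j+1}^{k} \ell_i m^i. \]
Since $\ell$ itself already has this shape outside position $j$, summing the parts of $\ell$ yields the alternate formula $C = \ell_0 + \ell_j m^j \geq 0$. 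The non-existence clause in the lemma translates to maximizing $g$ subject to $h' \geq 0$, and Euclidean division produces the unique maximal pair $g_{\max} = \lfloor C/m^j \rfloor$ with $h = C \bmod m^j$. This delivers uniqueness of $(h,g)$, the bound $0 \leq h < m^j$, and the required non-existence for larger $g$.

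The substantive work is verifying that $g_{\max}$ has the form $b_j + mr$ with $r \geq 1$ and that $\ell_j \leq n_j + mr$. For this I would compare the two expressions for $n$, namely the base-$m$ expansion $n = \sum n_i m^i$ and the partition formula $n = \ell_0 + \sum_{i=1}^{j-1} b_i m^i + \ell_j m^j + \sum_{i>j} \ell_i m^i$, modulo $m^{j+1}$. Setting $\epsilon_i := n_i - b_i \geq 0$ for $1 \leq i \leq j-1$ and using that $\ell_0 \geq n_0$ with $\ell_0 \equiv n_0 \pmod m$, the resulting congruence rearranges to
\[ \ell_0 = \Bigl(n_0 + \sum_{i=1}^{j-1} \epsilon_i m^i\Bigr) + \bigl((n_j - \ell_j) + mr\bigr)\, m^j \]
for some integer $r$. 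Since each $\epsilon_i < m$, the parenthesized low-order term lies in $[0, m^j)$, so it must be the remainder $h$, and the bracketed coefficient must be $\lfloor \ell_0/m^j \rfloor$. Therefore $g_{\max} = \ell_j + \lfloor \ell_0/m^j \rfloor = n_j + mr = b_j + mr$. Non-negativity $\ell_0 \geq 0$ combined with $\ell_j > n_j$ forces $(n_j - \ell_j) + mr \geq 0$, which simultaneously yields $r \geq 1$ and $\ell_j \leq n_j + mr$.

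The main obstacle I anticipate is the bookkeeping in this modular identity: one must confirm that the ``digit-defect'' $n_0 + \sum \epsilon_i m^i$ really stays inside $[0, m^j)$ so that it truly plays the role of the Euclidean remainder, and that the excess at level $m^j$ (created by $\ell_j > n_j$) is absorbed into a single integer coefficient of the form $(n_j - \ell_j) + mr$ rather than spilling into higher places. Once this identity is in hand, uniqueness, the residue condition, the positivity of $r$, and the inequality $\ell_j \leq n_j + mr$ all follow by reading off the quotient and remainder of $\ell_0$ on division by $m^j$.
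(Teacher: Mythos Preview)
Your proposal is correct and follows essentially the same route as the paper: both obtain $h$ by Euclidean division of $\ell_0$ (equivalently, of $C=\ell_0+\ell_j m^j$) by $m^j$, and both show the resulting maximal $g$ is congruent to $n_j$ modulo $m$ by tracing the contribution of the higher-order terms $\sum_{i>j}(n_i-\ell_i)m^i$. Your direct digit expansion of $\ell_0$ is a slightly cleaner packaging of what the paper does by first stripping off positions $1,\ldots,j-1$ and passing to $n'=(0,\ldots,0,n_j,\ldots,n_k)_m$, but the underlying computation is the same.
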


\begin{proof}
Let $s=\ell_j-b_j=\ell_j-n_j> 0$. According to the division algorithm, there is a unique $h$ satisfying $\ell_0=t\cdot m^j +h$ where $0\leq h<m^j$. Then, clearly
$$[h,b_1,b_2,\ldots, b_{j-1}, b_j+s+t,\ell_{j+1},\ldots,\ell_k]_m$$
is an $m$-ary partition of $n$. We then note that 
$$[h',0,0,\ldots, 0, b_j+s+t,\ell_{j+1},\ldots,\ell_k]_m$$
is an $m$-ary partition of $n$ where $h':=h+\sum_{i=1}^{j-1}b_i=\sum_{i=0}^{j-1}n_i<m^j$. This implies that 
$$[0,0,0,\ldots, 0, b_j+s+t,\ell_{j+1},\ldots,\ell_k]_m$$
is an $m$-ary partition of $n'=(0,0,\ldots,0, n_j,n_{j+1},\ldots,n_k)_m$. However, since $n_j=b_j$ and $s+t>0$, then $0<s+t=\sum_{i=j+1}^k(n_i-\ell_i)\cdot m^{i-j}$. Thus, $s+t=mr$ for some $r\geq 1$ as required. Finally, we see that $b_j+mr$ is the largest number of parts of the form $m^j$ we can have without reducing some $\ell_i$ with $i>j$.

\end{proof}

\begin{cor}
\label{maxchain2}
Let $\ell$ be a non-simple $m$-ary partition of $n=(n_0,n_1,\ldots,n_k)_m$ with $\ell\in f_{m,n}^{-1}(b)$ where $b=(b_0,b_1,...,b_k)_m$. Suppose that $\ell$ is of the form 
$$\ell=[\ell_0,b_1,b_2,\ldots, b_{j-1}, \ell_j,\ell_{j+1},\ldots,\ell_k]_m$$
with $\ell_j>n_j=b_j$. Then there is an $m$-ary partition of the form $[v,b_1,b_2,\ldots, b_{j-1}, u,\ell_{j+1},\ldots,\ell_k]_m$ for all $b_j< u\leq b_j+mr$ where $r$ is given by Lemma \ref{maxchain}.
\end{cor}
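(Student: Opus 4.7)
The plan is to use Lemma \ref{maxchain} to secure the ``top'' partition in which the coefficient of $m^j$ equals $b_j+mr$, and then show that every intermediate value of that coefficient can be realized by transferring weight, in blocks of size $m^j$, from the $m^j$-place into the $m^0$-place, with the coefficients $b_1,\ldots,b_{j-1}$ and $\ell_{j+1},\ldots,\ell_k$ left untouched.

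More concretely, I would start by invoking Lemma \ref{maxchain} to obtain the unique pair $(r,h)$ with $0\leq h<m^j$ such that
$$[h,b_1,b_2,\ldots,b_{j-1},b_j+mr,\ell_{j+1},\ldots,\ell_k]_m$$
is an $m$-ary partition of $n$. Given any $u$ with $b_j<u\leq b_j+mr$, set $s=b_j+mr-u$, so $0\leq s<mr$, and define
$$v=h+s\cdot m^j.$$
I would then verify that $[v,b_1,b_2,\ldots,b_{j-1},u,\ell_{j+1},\ldots,\ell_k]_m$ is an $m$-ary partition of $n$. Nonnegativity is immediate: $v\geq h\geq 0$, $u>b_j\geq 0$, and the remaining entries are unchanged. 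For the total, decreasing the $j$-th coefficient from $b_j+mr$ to $u$ subtracts $s\cdot m^j$ from the sum, while increasing the $0$-th coefficient from $h$ to $h+s\cdot m^j$ adds exactly $s\cdot m^j$ back. Since the partition supplied by Lemma \ref{maxchain} sums to $n$, so does this modified one.

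Because the construction is a direct rewriting of powers of $m^j$ as units, I do not anticipate a substantial obstacle; the only thing that really needs justification is that every $u$ in the prescribed range corresponds to a legal (nonnegative, integer) value of $v$, and this is built into the choice $v=h+s\cdot m^j$ with $s\in\{0,1,\ldots,mr-1\}$. The point of stating the corollary this way, I suspect, is to set up a bijection or chain structure on each equivalence class $f_{m,n}^{-1}(b)$ that will later be exploited to prove the equidistribution Theorem \ref{equidistributethm}.
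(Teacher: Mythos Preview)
Your proposal is correct and follows essentially the same argument as the paper: both start from the ``top'' partition $[h,b_1,\ldots,b_{j-1},b_j+mr,\ell_{j+1},\ldots,\ell_k]_m$ supplied by Lemma~\ref{maxchain} and obtain every intermediate $u$ by moving $s$ (the paper calls it $y$) copies of $m^j$ into the $m^0$-place, setting $v=h+s\cdot m^j$. Your write-up is in fact slightly more explicit in checking nonnegativity and that the total is preserved.
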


\begin{proof}
Let $b_j<u\leq b_j+mr$ and consider the partition of the form $\rho=[h,b_1,b_2,\ldots, b_{j-1}, b_j+mr,\ell_{j+1},\ldots,\ell_k]_m$ guaranteed by Lemma \ref{maxchain}. We then find $y$ such that $b_j+mr=u + y$ where $y\geq 0$. Then we construct an $m$-ary partition from $\rho$ by converting $y$ parts of the form $m^j$ to $y\cdot m^j$ parts of the form $m^0$, obtaining the partition
$$[h+y\cdot m^j,b_1,b_2,\ldots, b_{j-1}, u,\ell_{j+1},\ldots,\ell_k]_m$$
as required.
\end{proof}

Now, fix $b\ll_m n$ with $f_{m,n}^{-1}(b)\ne\emptyset$. For each $1\leq z\leq k$, we define 
$$B(z):=\{\rho\in f_{m,n}^{-1}(b)\mid \min\{i\geq 1\mid \rho_i\ne b_i\}=z\}.$$
Again, the following lemma is clear by construction.

\begin{lemma}
\label{jbpartition}
Let $b\ll_m n$ with $f_{m,n}^{-1}(b)\ne\emptyset$. The the collection of sets $\{B(z)\mid B(z)\ne\emptyset\}$ forms a partition of $f_{m,n}^{-1}(b)$.
\end{lemma}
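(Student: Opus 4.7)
The plan is to reduce the statement to the elementary fact that the nonempty fibers of any function partition its domain. Specifically, I would introduce the map $\tau: f_{m,n}^{-1}(b) \to \{1,2,\ldots,k\}$ defined by
$$\tau(\rho)=\min\{i\geq 1\mid \rho_i\ne b_i\},$$
and observe that, directly from the definition preceding the lemma, $B(z)=\tau^{-1}(z)$ for each $z$. Once $\tau$ is verified to be a well-defined function, the nonempty $B(z)$ are precisely the nonempty fibers of $\tau$, and these partition $f_{m,n}^{-1}(b)$.

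Hence the only point requiring argument is the well-definedness of $\tau$, i.e., that for every $\rho\in f_{m,n}^{-1}(b)$ the set $\{i\geq 1\mid \rho_i\ne b_i\}$ is nonempty and contained in $\{1,\ldots,k\}$. The upper bound is automatic because $\rho$ is recorded as $[\rho_0,\rho_1,\ldots,\rho_k]_m$, so the indices only range over $\{0,1,\ldots,k\}$. For nonemptiness I would invoke the definition of $S_m(n)$: since $\rho\in N_m(n)$ is non-simple, there is at least one index $i\geq 1$ with $\rho_i>n_i$. For any such $i$ we have $b_i=\min(n_i,\rho_i)=n_i<\rho_i$, so $\rho_i\ne b_i$, and the set in question is nonempty.

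There is no real obstacle here; the lemma is, as the author says, immediate from the construction. The only piece of content worth writing down is the observation above, which is the exact reason the minimum in the definition of $B(z)$ is always taken over a nonempty set and therefore produces a bona fide function on $f_{m,n}^{-1}(b)$.
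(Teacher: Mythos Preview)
Your proposal is correct and matches the paper's approach: the paper simply asserts that the lemma is ``clear by construction,'' and your argument via the fiber map $\tau$ is exactly the construction-level justification the author had in mind. Your added check that $\tau$ is well-defined (using that $\rho$ is non-simple so some $\rho_i>n_i=b_i$ with $i\ge 1$) is the only substantive point, and you handle it correctly.
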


As our final step, we fix $z$ with $1\leq z\leq k$ such that $B(z)\ne\emptyset$. Now, we define a relation on $B(z)$ as follows. We say $\rho\simeq_{b,z} \gamma$ if $\gamma_i=\rho_i$ for all $i>z$.

\begin{lemma}
\label{simeqpartition}
The relation $\simeq_{b,z}$ on $B(z)$ is an equivalence relation and so provides a partition of $B(z)$.
\end{lemma}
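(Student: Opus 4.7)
The plan is to observe that $\simeq_{b,z}$ is defined purely in terms of equality of certain coordinates, and any relation of this form is automatically an equivalence relation. Concretely, for $\rho,\gamma\in B(z)$ we declare $\rho\simeq_{b,z}\gamma$ iff $\rho_i=\gamma_i$ for every $i>z$, so the relation is the pullback of equality on the tuple $(\rho_{z+1},\ldots,\rho_k)\in\N^{k-z}$ along the projection map $\pi:B(z)\to\N^{k-z}$ sending $\rho\mapsto(\rho_{z+1},\ldots,\rho_k)$. Equality on any set is an equivalence relation, and the preimage of an equivalence relation under any map is again an equivalence relation, which gives the result immediately.

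If one prefers to check the three axioms directly, the arguments are one line each. Reflexivity is immediate since $\rho_i=\rho_i$ for all $i>z$. Symmetry follows from symmetry of equality on $\N$: if $\rho_i=\gamma_i$ for all $i>z$, then $\gamma_i=\rho_i$ for all $i>z$. Transitivity follows from transitivity of equality on $\N$: if $\rho_i=\gamma_i$ and $\gamma_i=\delta_i$ for all $i>z$, then $\rho_i=\delta_i$ for all $i>z$. Once $\simeq_{b,z}$ is established as an equivalence relation, the fact that its equivalence classes partition $B(z)$ is the standard general statement about equivalence relations, exactly as invoked in Lemma \ref{fpartition}.

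There is no real obstacle here; the lemma is a bookkeeping step that names the partition of $B(z)$ to be used in the proof of Theorem \ref{equidistributethm}. The substantive content is postponed to the subsequent analysis of how $nops$ varies across each $\simeq_{b,z}$-class (where Corollary \ref{maxchain2} will be used to exhibit $m$ partitions within a class whose $nops$-values cover every residue modulo $m$).
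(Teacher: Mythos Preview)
Your proposal is correct and matches the paper's approach: the paper's proof is simply ``This is again clear by construction,'' and your argument is precisely the unpacking of that remark (the relation is defined via equality of coordinates, hence trivially an equivalence relation).
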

\begin{proof}
This is again clear by construction.
\end{proof}

\begin{prop}
\label{smallclass}
Let $n\in \N$, $b\in \N$ with $b\ll_m n$ and $1\leq z\leq k$ such that $f_{m,n}^{-1}(b)\ne\emptyset$ and $B(z)\ne\emptyset$. Then the $nops$ function is equidistributed modulo $m$ on each equivalence class of $\simeq_{b,z}$.
\end{prop}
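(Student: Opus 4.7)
The plan is to give an explicit parametrization of each $\simeq_{b,z}$-equivalence class and then to observe that $nops$ advances by $1\pmod m$ as we step through the class, so that its values trace out a contiguous arithmetic progression of length a multiple of $m$.

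First I would fix $\rho\in B(z)$ and let $C$ be its $\simeq_{b,z}$-class. By the definition of $B(z)$ we have $\rho_i=b_i$ for $1\le i<z$ and $\rho_z>n_z=b_z$, while by the definition of $\simeq_{b,z}$ any $\gamma\in C$ satisfies $\gamma_i=\rho_i$ for $i>z$. Combining these, every element of $C$ is forced to have the form
$$\gamma=[\ell_0,\,b_1,\,b_2,\,\ldots,\,b_{z-1},\,u,\,\rho_{z+1},\,\ldots,\,\rho_k]_m,$$
so only the $0$-th and $z$-th components are free. Lemma \ref{maxchain} applied to $\rho$ gives a unique $r\ge 1$ so that $b_z+mr$ is the largest value of $u$ for which such a partition exists (without altering $\rho_{z+1},\dots,\rho_k$), and Corollary \ref{maxchain2} guarantees that every $u$ with $b_z<u\le b_z+mr$ is actually realized by some partition in $C$. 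Since the sum constraint $\sum_i \gamma_i m^i=n$ determines $\ell_0$ from $u$, this realization is unique; I would write $\gamma(u)$ for it and conclude that $C=\{\gamma(u)\mid b_z<u\le b_z+mr\}$ has size exactly $mr$.

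Next I would compute how $nops$ changes as $u$ increases by $1$. Increasing $u$ by $1$ adds $m^z$ to the total, so to keep the sum equal to $n$ we must decrease $\ell_0$ by $m^z$; in particular $\ell_0(u)$ remains nonnegative throughout the range because $\gamma(b_z+mr)$ is itself a valid partition. Hence
$$nops(\gamma(u+1))-nops(\gamma(u))=1-m^z\equiv 1\pmod m,$$
using $z\ge 1$. Therefore the residues $nops(\gamma(b_z+1)),\,nops(\gamma(b_z+2)),\,\ldots,\,nops(\gamma(b_z+mr))$ form an arithmetic progression modulo $m$ with common difference $1$ and $mr$ terms, which hits every residue class modulo $m$ exactly $r$ times.

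The main obstacle is the first step: pinning down that $C$ is \emph{exactly} the set $\{\gamma(u)\}_{b_z<u\le b_z+mr}$ with no gaps and no extras. That $C$ is contained in this set is immediate from the forced shape of its members and from Lemma \ref{maxchain} (which bounds $u$ above), while the reverse containment is precisely the content of Corollary \ref{maxchain2}. Once this parametrization is established, the congruence $1-m^z\equiv 1\pmod m$ and a count of terms finish the proof with no further work.
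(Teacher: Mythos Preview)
Your proof is correct and follows essentially the same approach as the paper: both arguments use Lemma~\ref{maxchain} and Corollary~\ref{maxchain2} to parametrize the class $C$ by the values $u$ with $b_z<u\le b_z+mr$, and then check that $nops$ cycles through all residues modulo $m$ as $u$ increases. Your formulation via the step $nops(\gamma(u+1))-nops(\gamma(u))=1-m^z\equiv 1\pmod m$ is a slightly cleaner packaging of what the paper does by grouping $C$ into the subsets $C_w$ according to $u\bmod m$.
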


\begin{proof}
Suppose the $C$ is an equivalence class of $\simeq_{b,z}$. Then by construction, there exists $\ell_{z+1},\ell_{z+2},\ldots, \ell_{k}$ such that every partition in $C$ is of the form 
$$[h,b_1,b_2,\ldots, b_{z-1}, h',\ell_{z+1},\ell_{z+2},\ldots, \ell_k]_m$$
for some $h$ and $h'$ with $h'>b_z$. Now according to Lemma \ref{maxchain2} and Corollary \ref{maxchain2}, there exists some $r\geq 1$ such that
$$C=\{[h,b_1,b_2,\ldots, b_{z-1}, u,\ell_{z+1},\ell_{z+2},\ldots, \ell_k]_m\mid h\in\N \mbox{ and } b_j<u\leq b_j+mr\}.$$
Thus $|C|=mr$. Now, for each $1\leq w\leq m$ we define
$$C_w=\{[h_j,b_1,b_2,\ldots, b_j+w+jm,\ell_{z+1},\ell_{z+2},\ldots, \ell_k]_m\mid 1\leq j\leq (r-1)\},$$
and we note that  $|C_w|=r-1$ for all $w$ and the set $\{C_w\}$ forms a partition of $C$. Moreover, for each $w$, $nops(\gamma)\equiv nops(\rho) \pmod{m}$ for all $\gamma,\rho\in C_w$, and $nops(\rho)\equiv nops(\gamma)+1 \pmod{m}$ whenever $\gamma\in C_w$ and $\rho\in C_{w+1}$.
\end{proof}

\section{Proof of Theorem \ref{equidistributethm} and Consequences}
\label{proofofthm}

\begin{proof}[Proof of Theorem \ref{equidistributethm}]
Let $b\ll_n n$ with $f_{m,n}^{-1}(b)\ne\emptyset$. Then, let $1\leq z\leq k$ with $B(z)$ be non-empty. By Proposition \ref{smallclass} and Lemma \ref{simeqpartition} , the $nops$ function is equidistributed mod $m$ on $B(z)$. Likewise, by Lemma \ref{jbpartition}, the $nops$ function is equidistributed mod $m$ on $f_{m,n}^{-1}(b)$. Finally, Lemma \ref{fpartition} implies that the $nops$ function is equidistributed mod $m$ on $N_m(n)$.
\end{proof}

Let $n=(n_0,n_1,\ldots,n_k)_m$. Then, according to Theorem \ref{equidistributethm}, $N_m(n)  = m\cdot q$ where $q$ is the number of non-simple $m$-ary partitions with with number of parts divisible by $m$. However, it is clear that there is a bijection between simple $m$-ary partitions of $n$ and the integers equivalent to $n$ mod $m$ that are $m$-dominated by $n$:
$$[\ell_0,b_1,b_2,\ldots,b_k]_m\longleftrightarrow (n_0, b_1,b_2,\ldots, b_k)_m.$$
As previously mentioned, there are $\prod_{i=1}^k(n_i+1)$ integers equivalent to $n$ mod $m$ that are $m$-dominated by $n$ (see \cite{edgarjudaball} and use the fact that $b$ is equivalent $n$ mod $m$ if and only if $b_0=n_0$). Thus, we see that 
$$b_m(n) = |N_m(n)|+|S_m(n)| = m\cdot q +\prod_{i=1}^k(n_i+1)$$ 
so that Corollary \ref{congruence} holds.

Understanding the $nops$ function on $N_m(n)$ allows us to characterize when the $nops$ function is equidistributed mod $m$ on the entire set of $m$-ary partitions, $P_m(n)$.

\begin{cor}
\label{nopsonsimple}
The $nops$ function is equidistributed modulo $m$ on $P_m(n)$ if and only if $nops$ is equidistributed modulo $m$ on the simple $m$-ary partitions, $S_m(n)$.
\end{cor}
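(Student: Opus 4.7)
The plan is to observe that $P_m(n) = S_m(n) \sqcup N_m(n)$ is a disjoint union, and then simply combine this decomposition with the equidistribution already established on $N_m(n)$ by Theorem \ref{equidistributethm}.

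Concretely, for each residue $r \in \{0,1,\ldots,m-1\}$, let
\[
a_r = \#\{\lambda \in P_m(n) : nops(\lambda) \equiv r \pmod{m}\},
\]
and define $s_r$ and $t_r$ analogously by replacing $P_m(n)$ with $S_m(n)$ and $N_m(n)$, respectively. Since $S_m(n)$ and $N_m(n)$ partition $P_m(n)$, we have the identity $a_r = s_r + t_r$ for every $r$.

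By Theorem \ref{equidistributethm}, the values $t_0, t_1, \ldots, t_{m-1}$ are all equal to $|N_m(n)|/m$. Therefore $a_r$ is independent of $r$ if and only if $s_r$ is independent of $r$, which is exactly the statement that equidistribution on $P_m(n)$ is equivalent to equidistribution on $S_m(n)$. There is no real obstacle here; the corollary is essentially a bookkeeping consequence of Theorem \ref{equidistributethm}, and the main content of the corollary is conceptual, reducing the question of equidistribution on all $m$-ary partitions to the (typically much smaller and more tractable) subset of simple partitions.
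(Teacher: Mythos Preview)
Your argument is correct and follows exactly the same approach as the paper: the paper's proof is the single sentence ``This follows from Theorem~\ref{equidistributethm} since $P_m(n)$ is the disjoint union of $N_m(n)$ and $S_m(n)$,'' and your version simply unpacks that sentence by writing out the residue counts $a_r = s_r + t_r$ explicitly.
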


\begin{proof}
This follows from Theorem \ref{equidistributethm} since $P_m(n)$ is the disjoint union of $N_m(n)$ and $S_m(n)$.
\end{proof}

\begin{theorem}
Let $m\geq 2$ and let $n=(n_0,n_1,\ldots,n_k)_m$ be the base-$m$ representation of $n$. Then the $nops$ function is equidistributed modulo $m$ on $P_m(n)$ if and only if the set $\{n_1,n_2,\ldots,n_k\}$ contains $m-1$.
\end{theorem}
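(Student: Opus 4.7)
The plan is to invoke Corollary \ref{nopsonsimple} right away, reducing the question to equidistribution of $nops$ modulo $m$ on $S_m(n)$. To set up the calculation, I would parametrize $S_m(n)$ by tuples $(\ell_1,\ldots,\ell_k)$ with $0\leq \ell_i\leq n_i$, with $\ell_0$ forced by $\ell_0 = n - \sum_{i=1}^k \ell_i m^i$ (automatically a nonnegative integer congruent to $n_0$ mod $m$). A one-line computation using $m^i\equiv 0\pmod m$ for $i\geq 1$ yields
$$nops([\ell_0,\ell_1,\ldots,\ell_k]_m) \equiv n_0 + \sum_{i=1}^k \ell_i \pmod m,$$
so the question collapses to whether $\sum_{i=1}^k \ell_i$ is equidistributed mod $m$ as the tuple varies.

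For the sufficiency direction, assuming $n_j = m-1$ for some $j\geq 1$, I would define a map $T:S_m(n)\to S_m(n)$ that cycles the $j$-th coordinate $\ell_j \mapsto \ell_j + 1 \pmod m$ and adjusts $\ell_0$ correspondingly: decrease $\ell_0$ by $m^j$ when $\ell_j < m-1$, and increase $\ell_0$ by $(m-1)m^j$ when $\ell_j = m-1$. A short check confirms that the adjusted $\ell_0$ remains nonnegative (using $n_0 + (n_j-\ell_j)m^j \geq m^j$ in the first case), that $T$ is a bijection of order $m$, and that in both cases the change in $nops$ is $\equiv 1 \pmod m$. Partitioning $S_m(n)$ into orbits of $T$ then yields equidistribution.

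For the necessity direction, assuming all $n_i \leq m-2$ for $i\geq 1$, I would apply a roots-of-unity filter to the generating function $P(x) = \prod_{i=1}^k (1 + x + \cdots + x^{n_i})$. Equidistribution modulo $m$ of coefficient-sums is equivalent to $P(\zeta^t)=0$ for $t=1,\ldots,m-1$, where $\zeta = e^{2\pi i/m}$. But at $t=1$ each factor $(\zeta^{n_i+1}-1)/(\zeta-1)$ is nonzero, since $1 \leq n_i+1 \leq m-1$ forces $\zeta^{n_i+1}\neq 1$. Hence $P(\zeta)\neq 0$, contradicting equidistribution.

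The main obstacle I foresee is in the necessity direction: the sufficiency side is a concrete bijection, while showing failure of equidistribution in the absence of an $n_i = m-1$ seems to call for the analytic roots-of-unity argument (or a more intricate combinatorial inequality). I expect the generating-function route to be the cleanest path and intend to present it in that form.
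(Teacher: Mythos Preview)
Your proposal is correct. The sufficiency direction is essentially the paper's argument in different clothing: the paper partitions $S_m(n)$ into the fibers $A_w=\{\ell\in S_m(n):\ell_j=w\}$ for $w=0,\dots,m-1$ and exhibits explicit bijections $g_{0,w}:A_0\to A_w$ shifting $nops$ by $w$; your cyclic map $T$ packages exactly these bijections into a single order-$m$ permutation whose orbits are the transversals $\{\ell,g_{0,1}(\ell),\dots,g_{0,m-1}(\ell)\}$. Same idea, same verification that $\ell_0$ stays nonnegative.

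The necessity direction, however, is a genuinely different route. The paper argues combinatorially by peeling off one nonzero digit $n_j$ at a time: it partitions $S_m(n)$ into $A_0,\dots,A_{n_j}$ as above, claims that (since $n_j+1\le m-1$) equidistribution on $S_m(n)$ is equivalent to equidistribution on $A_0$, and then observes that $A_0$ is in $nops$-preserving bijection with $S_m(n-n_jm^j)$; iterating reduces to the base case $n=(n_0,0,\dots,0,n_k)_m$, which has only $n_k+1\le m-1$ simple partitions. Your approach instead reduces the problem to a single evaluation: with $P(x)=\prod_{i=1}^k(1+x+\cdots+x^{n_i})$, equidistribution forces $P(\zeta)=0$ for a primitive $m$-th root $\zeta$, but each factor $(\zeta^{n_i+1}-1)/(\zeta-1)$ is nonzero because $1\le n_i+1\le m-1$. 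This is shorter, avoids the induction entirely, and sidesteps the somewhat delicate ``if and only if'' step in the paper's reduction (which, as stated, requires knowing that the factor $1+\zeta+\cdots+\zeta^{n_j}$ is a non-zero-divisor at \emph{every} nontrivial $m$-th root---true for $\zeta$ primitive, but not for all $\zeta^t$). The trade-off is that the paper's argument stays purely combinatorial, whereas yours imports the roots-of-unity filter; given the context of the paper, either is acceptable, and your preliminary computation $nops\equiv n_0+\sum_{i\ge1}\ell_i\pmod m$ is a clean unifying observation that the paper does not make explicit.
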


\begin{proof}
First, suppose that $n_i=m-1$ for some $i\geq 1$. Due to Corollary \ref{nopsonsimple}, we need to show that the $nops$ function is equidistributed on $S_m(n)$. Now, for each $w\in\{0,1,\ldots, m-1\}$, let 
$$A_w=\{\ell\in S_m(n)\mid \ell_i=w\}.$$
Then, it is clear that $\{A_w\mid w\in\{0,1,\ldots, m-1\}$ forms a set partition of $S_m(n)$. Furthermore, since all the $m$-ary partitions in $A_w$ are simple, there is a bijection $g_{w,w'}:A_{w}\to A_{w'}$ given by 
$$g_{w,w'}((\ell_0,\ell_1,\ldots, w,\ldots,\ell_k)):= (\ell_0 + (w-w')\cdot m^i,\ell_1,\ldots, w',\ldots,\ell_k)$$ 
so that $|A_w|=|A_{w'}|$ for all $w,w'\in \{0,1,\ldots, m-1\}$. Finally, let $\ell\in A_{0}$. Then for each $w\in\{0,1,\ldots,m-1\}$ we have $nops(g_{0,w}(\ell))\equiv nops(\ell) + w \pmod{m}$. Thus the $nops$ function is equidistributed mod $m$ on $S_m(n)$.

Conversely, suppose that $m-1\not\in\{n_1,\ldots,n_k\}$. First, assume that the only nonzero base-$m$ digits are $n_0$ and $n_k$ so that  by assumption $n_k\leq m-2$. Then, there are only $n_k+1\leq m-1$ simple partitions, and so the $nops$ function cannot be equidistributed mod $m$ on $S_m(n)$. Next, assume that $0<n_j\leq m-2$ for some $1\leq j<k$. Similar to the previous paragraph, for each $w\in\{0,1,\ldots, n_j\}$, let 
$$A_w=\{\ell\in S_m(n)\mid \ell_j=w\}.$$
As before, $|A_w|=|A_{w'}|$ for all $w,w'\in\{0,1,\ldots, n_j\}$ and for each $\ell\in A_0$ and each $w\in\{0,1,\ldots,n_j\}$ we have $nops(g_{0,w}(\ell))\equiv nops(\ell) + w \pmod{m}$. Since $n_j\leq m-2$, then the $nops$ function will be equidistributed mod $m$ on $S_m(n)$ if and only if the $nops$ function is equidistributed mod $m$ on $A_0$. However, we see that there is a bijection $h:A_0\to S_m(n-n_j\cdot m^j)$ given by
$$h((\ell_0,\ell_1,\ldots, 0, \ldots, \ell_k)) := (\ell_0-n_j\cdot m^j, \ell_1,\ldots, 0, \ldots, \ell_k).$$
Moreover, we note that $nops(h(\ell))\equiv nops(\ell) \pmod{m}$ so that $nops$ is equidistributed mod $m$ on $A_0$ if and only if $nops$ is equidistributed mod $m$ on $S_m(n-n_j\cdot m^j)$, which implies that $nops$ is equidistributed mod $m$ on $S_m(n)$ if and only if $nops$ is equidistributed mod $m$ on $S_m(n-n_j\cdot m^j)$. Since the digit sets of $n$ and $n-n_j\cdot m^j$ are identical except in position $j$, we can use this argument to deduce that $nops$ is equidistributed mod $m$ on $S_m(n)$ if and only if $nops$ is equidistributed mod $m$ on $S_m\left(n-\sum_{i=1}^{k-1}n_i\cdot m^i\right)$. However, $n-\sum_{i=1}^{k-1}n_i\cdot m^i=(n_0,0,\ldots, 0, n_k)$ and $n_k\leq m-2$; in this case, we have already shown that $nops$ is not equidistributed mod $m$ $S_m\left(n-\sum_{i=1}^{k-1}n_i\cdot m^i\right)$. The result follows.
 \end{proof}

\section{Detailed Example}
\label{partitionexample}

We illustrate the results of the previous two sections with an example. Let $m=3$ and consider $n=60=(0, 2, 0, 2)_3$. Then  the total number of $3$-ary partitions of $60$ is $117$, i.e. $b_3(60)=117$. Of these $117$, there are $9$ simple partitions listed in the box below.

\vspace{.1in}
\tikzstyle{mybox} = [draw=black, fill=white, very thick,
    rectangle, rounded corners, inner sep=10pt, inner ysep=20pt,scale=.9]
\tikzstyle{fancytitle} =[draw=black,fill=white, text=black]

 \begin{tikzpicture}
 \node [mybox] (simple){
 \begin{minipage}{6in}
[0, 2, 0, 2],
 [3, 1, 0, 2],
 [6, 0, 0, 2],
 [27, 2, 0, 1],
 [30, 1, 0, 1],
 [33, 0, 0, 1],
 [54, 2, 0, 0],
 [57, 1, 0, 0],
 [60, 0, 0, 0]
 \end{minipage}
 };
 \node[fancytitle, right=10pt] at (simple.north west) {$S_3(60)$};
 \end{tikzpicture}

\vspace{.1in}

In the next two pages, we list the remaining $108$ non-simple partitions, those in $N_3(60)$, using the results in Section \ref{technical}. The numbers $3$-dominated by 60 are 
$$0, 3, 6, 27, 30, 33, 54, 57, 60.$$ 
Let $f$ represent $f_{3,60}$. It turns out that $f^{-1}(54)$, $f^{-1}(57)$, and $f^{-1}(60)$ are all empty. There are 6 partitions in $f^{-1}(0)$ and $f^{-1}(3)$; there are 69 partitions in $f^{-1}(6)$; there are 3 partitions in $f^{-1}(27)$ and $f^{-1}(30)$; and there are 21 partitions in $f^{-1}(33)$. All of the nonempty inverse images are listed below; the subsets correspond to the nonempty sets $B(z)$ for $1\leq z\leq 3$ and then the subsets of $B(z)$ correspond to the partition given by $\simeq_{b,z}$ guaranteed by Lemma \ref{simeqpartition}. The most representative example is that of $f^{-1}(6)$ as it contains both $B(1)$ and $B(2)$ ($B(3)=\emptyset$) and $B(1)$ is further partitioned into six equivalence classes for $\simeq_{6,1}$.

We can then check that each the cardinality of the equivalence classes of $\simeq_{b,z}$ is a multiple of $3$ and the $nops$ function is equidistributed mod $3$ on these smallest parts (see the proof of Theorem \ref{equidistributethm}) thus showing that the $nops$ function is equidistributed on $N_3(60)$. 

\vfill

\newpage

\begin{tikzpicture}
\node [mybox] (inverse0){%
    \begin{minipage}{6in}
      \begin{tikzpicture}
 	\node [mybox] (B2){
 	\begin{minipage}{5.5in}
	 \vspace{.1in}

 	\begin{tikzpicture}
 	\node [mybox] (zz){
 \begin{minipage}{5.2in}
[6, 0, 6, 0], [15, 0, 5, 0], [24, 0, 4, 0] ,[33, 0, 3, 0], [42, 0, 2, 0], [51, 0, 1, 0]
 \end{minipage}
 };
 \node[fancytitle, right=10pt, scale=.45] at (zz.north west) {$[*,*,*,0]$};
 \end{tikzpicture}
\end{minipage}
};
\node[fancytitle, right=10pt, scale=.65] at (B2.north west) {$B(2)$};
\end{tikzpicture}

\end{minipage}
};
\node[fancytitle, right=10pt, text=blue] at (inverse0.north west) {$f^{-1}(0)$; $0=(0,0,0,0)_3$};

\end{tikzpicture}%

\vspace{.15in}

\begin{tikzpicture}
\node [mybox] (inverse3){%
    \begin{minipage}{6in}
      \begin{tikzpicture}
 	\node [mybox] (B2){
 	\begin{minipage}{5.5in}
	 \vspace{.1in}

 	\begin{tikzpicture}
 	\node [mybox] (zz){
 \begin{minipage}{5.2in}
[3, 1, 6, 0], [12, 1, 5, 0] ,[21, 1, 4, 0], [30, 1, 3, 0], [39, 1, 2, 0], [48, 1, 1, 0]

 \end{minipage}
 };
 \node[fancytitle, right=10pt, scale=.45] at (zz.north west) {$[*,*,*,0]$};
 \end{tikzpicture}
\end{minipage}
};
\node[fancytitle, right=10pt, scale=.65] at (B2.north west) {$B(2)$};
\end{tikzpicture}

\end{minipage}
};
\node[fancytitle, right=10pt, text=red] at (inverse0.north west) {$f^{-1}(3)$; $3=(0,1,0,0)_3$};

\end{tikzpicture}%

\vspace{.15in}

\begin{tikzpicture}
\node [mybox] (inverse27){%
    \begin{minipage}{6in}
      \begin{tikzpicture}
 	\node [mybox] (B2){
 	\begin{minipage}{5.5in}
	 \vspace{.1in}

 	\begin{tikzpicture}
 	\node [mybox] (zz){
 \begin{minipage}{5.2in}
[6, 0, 3, 1], [15, 0, 2, 1], [24, 0, 1, 1]
 \end{minipage}
 };
 \node[fancytitle, right=10pt, scale=.45] at (zz.north west) {$[*,*,*,1]$};
 \end{tikzpicture}
\end{minipage}
};
\node[fancytitle, right=10pt, scale=.65] at (B2.north west) {$B(2)$};
\end{tikzpicture}

\end{minipage}
};
\node[fancytitle, right=10pt,text=OliveGreen] at (inverse27.north west) {$f^{-1}(27)$; $27=(0,0,0,1)_3$};

\end{tikzpicture}%

\vspace{.3in}

\begin{tikzpicture}
\node [mybox] (inverse30){%
    \begin{minipage}{6in}
      \begin{tikzpicture}
 	\node [mybox] (B2){
 	\begin{minipage}{5.5in}
	 \vspace{.1in}
 	\begin{tikzpicture}
 	\node [mybox] (zz){
 \begin{minipage}{5.2in}
[3, 1, 3, 1], [12, 1, 2, 1], [21, 1, 1, 1]
 \end{minipage}
 };
 \node[fancytitle, right=10pt, scale=.45] at (zz.north west) {$[*,*,*,1]$};
 \end{tikzpicture}
\end{minipage}
};
\node[fancytitle, right=10pt, scale=.65] at (B2.north west) {$B(2)$};
\end{tikzpicture}

\end{minipage}
};
\node[fancytitle, right=10pt,text=cyan] at (inverse30.north west) {$f^{-1}(30)$; $30=(0,1,0,1)_3$};

\end{tikzpicture}%

\vspace{.3in}

\begin{tikzpicture}
\node [mybox] (inverse6){%
    \begin{minipage}{6in}
     
 \begin{tikzpicture}
 \node [mybox] (B1){
 \begin{minipage}{5.5in}
  \vspace{.1in}

 \begin{tikzpicture}
 \node [mybox] (zz){
 \begin{minipage}{5in}
[0, 20, 0, 0],
[3, 19, 0, 0],
[6, 18, 0, 0],
[9, 17, 0, 0],
[12, 16, 0, 0],\newline
[15, 15, 0, 0],
[18, 14, 0, 0],
[21, 13, 0, 0],
[24, 12, 0, 0],
[27, 11, 0, 0],
[30, 10, 0, 0],
[33, 9, 0, 0],
[36, 8, 0, 0],
[39, 7, 0, 0],
[42, 6, 0, 0],\newline
[45, 5, 0, 0],
[48, 4, 0, 0],
[51, 3, 0, 0]
 \end{minipage}
 };
 \node[fancytitle, right=10pt, scale=.45] at (zz.north west) {$[*,*,0,0]$};
 \end{tikzpicture}
 
 \vspace{.1in}

 \begin{tikzpicture}
 \node [mybox] (oz){
 \begin{minipage}{5in}
 [0, 17, 1, 0],
[3, 16, 1, 0],
[6, 15, 1, 0],
[9, 14, 1, 0],
[12, 13, 1, 0],\newline
[15, 12, 1, 0],
[18, 11, 1, 0],
[21, 10, 1, 0],
[24, 9, 1, 0],
[27, 8, 1, 0],\newline
[30, 7, 1, 0],
[33, 6, 1, 0],
[36, 5, 1, 0],
[39, 4, 1, 0],
[42, 3, 1, 0]

 \end{minipage}
 };
 \node[fancytitle, right=10pt, scale=.45] at (oz.north west) {$[*,*,1,0]$};
 \end{tikzpicture}

 \vspace{.1in}

 \begin{tikzpicture}
 \node [mybox] (tz){
 \begin{minipage}{5in}
[0, 14, 2, 0],
[3, 13, 2, 0],
[6, 12, 2, 0],
[9, 11, 2, 0],
[12, 10, 2, 0],\newline
[15, 9, 2, 0],
[18, 8, 2, 0],
[21, 7, 2, 0],
[24, 6, 2, 0],
[27, 5, 2, 0],\newline
[30, 4, 2, 0],
[33, 3, 2, 0]
 \end{minipage}
 };
 \node[fancytitle, right=10pt, scale=.45] at (tz.north west) {$[*,*,2,0]$};
 \end{tikzpicture}

 \vspace{.1in}

 \begin{tikzpicture}
 \node [mybox] (thz){
 \begin{minipage}{5in}
[0, 11, 3, 0],
[3, 10, 3, 0],
[6, 9, 3, 0],
[9, 8, 3, 0],
[12, 7, 3, 0],\newline
[15, 6, 3, 0],
[18, 5, 3, 0],
[21, 4, 3, 0],
[24, 3, 3, 0]

 \end{minipage}
 };
 \node[fancytitle, right=10pt, scale=.45] at (thz.north west) {$[*,*,3,0]$};
 \end{tikzpicture}

 \vspace{.1in}

 \begin{tikzpicture}
 \node [mybox] (fz){
 \begin{minipage}{5in}
[0, 8, 4, 0],
[3, 7, 4, 0],
[6, 6, 4, 0],
[9, 5, 4, 0],
[12, 4, 4, 0],
[15, 3, 4, 0]
 \end{minipage}
 };
 \node[fancytitle, right=10pt, scale=.45] at (fz.north west) {$[*,*,4,0]$};
 \end{tikzpicture}

 \vspace{.1in}

 \begin{tikzpicture}
 \node [mybox] (fiz){
 \begin{minipage}{5in}
[0, 5, 5, 0],
[3, 4, 5, 0],
[6, 3, 5, 0]
 \end{minipage}
 };
 \node[fancytitle, right=10pt, scale=.45] at (fiz.north west) {$[*,*,5,0]$};
 \end{tikzpicture}

 \end{minipage}
 };
 \node[fancytitle, right=10pt, scale=.65] at (B1.north west) {$B(1)$};
 \end{tikzpicture}
 
  \vspace{.1in}

  \begin{tikzpicture}
 \node [mybox] (B2){
 \begin{minipage}{5.5in}
  \vspace{.1in}

  	\begin{tikzpicture}
 	\node [mybox] (zz){
 \begin{minipage}{5in}
$[0, 2, 6, 0]$, $[9, 2, 5, 0]$, $[18, 2, 4, 0]$, $[27, 2, 3, 0]$, $[36, 2, 2, 0]$, $[45, 2, 1, 0]$ \end{minipage}
 };
 \node[fancytitle, right=10pt, scale=.45] at (zz.north west) {$[*,*,*,0]$};
 \end{tikzpicture}
 \end{minipage}
 };
 \node[fancytitle, right=10pt, scale=.65] at (B2.north west) {$B(2)$};
 \end{tikzpicture}

     \end{minipage}
};
\node[fancytitle, right=10pt,text=magenta] at (inverse6.north west) {$f^{-1}(6)$; $6=(0,2,0,0)_3$};

\end{tikzpicture}%

\begin{tikzpicture}
\node [mybox] (inverse33){%
    \begin{minipage}{6in}

      \begin{tikzpicture}
 	\node [mybox] (B1){
 	\begin{minipage}{5.5in}
	 \vspace{.1in}

 	\begin{tikzpicture}
 	\node [mybox] (zo){
 \begin{minipage}{5.2in}

[0, 11, 0, 1],
[3, 10, 0, 1],
[6, 9, 0, 1],
[9, 8, 0, 1],
[12, 7, 0, 1],
[15, 6, 0, 1],
[18, 5, 0, 1],
[21, 4, 0, 1],
[24, 3, 0, 1]

 \end{minipage}
 };
 \node[fancytitle, right=10pt, scale=.45] at (zo.north west) {$[*,*,0,1]$};
 \end{tikzpicture}

 \vspace{.1in}

 	\begin{tikzpicture}
 	\node [mybox] (oo){
 \begin{minipage}{5.2in}

[0, 8, 1, 1],
[3, 7, 1, 1],
[6, 6, 1, 1],
[9, 5, 1, 1],
[12, 4, 1, 1],
[15, 3, 1, 1]

 \end{minipage}
 };
 \node[fancytitle, right=10pt, scale=.45] at (oo.north west) {$[*,*,1,1]$};
 \end{tikzpicture}

 \vspace{.1in}

	\begin{tikzpicture}
 	\node [mybox] (to){
 \begin{minipage}{5.2in}
[0, 5, 2, 1],
[3, 4, 2, 1],
[6, 3, 2, 1]

 \end{minipage}
 };
 \node[fancytitle, right=10pt, scale=.45] at (to.north west) {$[*,*,2,1]$};
 \end{tikzpicture}

\end{minipage}
};
\node[fancytitle, right=10pt, scale=.65] at (B1.north west) {$B(1)$};
\end{tikzpicture}
 \vspace{.1in}

      \begin{tikzpicture}
 	\node [mybox] (B2){
 	\begin{minipage}{5.5in}
	 \vspace{.1in}

 	\begin{tikzpicture}
 	\node [mybox] (zz){
 \begin{minipage}{5.2in}
[0, 2, 3, 1],
[9, 2, 2, 1],
[18, 2, 1, 1]

 \end{minipage}
 };
 \node[fancytitle, right=10pt, scale=.45] at (zz.north west) {$[*,*,*,1]$};
 \end{tikzpicture}
\end{minipage}
};
\node[fancytitle, right=10pt, scale=.65] at (B2.north west) {$B(2)$};
\end{tikzpicture}

\end{minipage}
};
\node[fancytitle, right=10pt, text=orange] at (inverse33.north west) {$f^{-1}(33)$; $33=(0,2,0,1)_3$};

\end{tikzpicture}%

\section{Extensions}

In this section, we briefly discuss a possible way to extend our results to other congruence relations. We note that the set of non-simple $m$-ary partitions $N_m(n)$ can be defined as
$$N_m(n) = \{\ell\in P_m(n)\mid \ell_j>n_j\mbox{ for some $j\geq 1$}\}$$
where $n=(n_0,\ldots, n_k)_m$ is the base-$m$ representation of $n$. Consider the following generalizations. For any $c\geq 1$, we let 
$$N_{m,c} = \{\ell\in P_m(n)\mid \ell_j>n_j, \ell_{j+1}=n_{j+1},\ldots, \ell_{j+c}=n_{j+c} \mbox{ for some $j\geq 1$}\},$$
where we note that $N_m(n)$ can be interpreted as $N_{m,0}$. Then, we can prove a result analogous to Lemma \ref{maxchain} that shows $|N_{m,c}|\equiv 0 \pmod{m^{c+1}}$. Therefore, if we can determine the size of the set 
$$S_{m,c}(n):=P_m(n)\setminus N_{m,c}(n)$$ 
using only knowledge of $n$ (possibly the base-$m$ representation of $n$), then we will obtain interesting congruence properties for $b_m(n)$ mod $m^{c+1}$ for any $c$.

\label{future}

\nocite{andrewsbook}

\bibliographystyle{plain}
\bibliography{equidistribution_m-ary.bib}

\end{document}